\newtheorem{theorem}{Theorem}[section]
\newtheorem{lemma}[theorem]{Lemma}
\theoremstyle{definition}
\newtheorem{definition}[theorem]{Definition}
\newtheorem{example}[theorem]{Example}
\begin{document}
	
\baselineskip=17pt	

\title[Uniform Stability of Dynamic SICA HIV Transmission Models]{%
Uniform Stability of Dynamic SICA HIV Transmission Models on Time Scales}


\author[Z. Belarbi]{Zahra Belarbi}

\address{Zahra Belarbi\newline
\indent Department of Mathematics,\newline
\indent University Mustapha Stambouli of Mascara,
B.P. 305 Mascara, Algeria;\newline
\indent Ecole Normale Sup\'{e}rieure, Mostaganem, Algeria}
\email{zahra.belarbi@univ-mascara.dz}


\author[B. Bayour]{Benaoumeur Bayour}
\address{Benaoumeur Bayour\newline
\indent Department of Mathematics,\newline
\indent University Mustapha Stambouli of Mascara,
B.P. 305 Mascara, Algeria}
\email{b.bayour@univ-mascara.dz}


\author[D. F. M. Torres]{Delfim F. M. Torres}
\address{Delfim F. M. Torres\newline
\indent Center for Research and Development in Mathematics\newline
\indent and Applications (CIDMA), Department of Mathematics,\newline 
\indent University of Aveiro, 3810-193 Aveiro, Portugal;\newline
\indent Research Center in Exact Sciences (CICE),\newline
\indent Faculty of Sciences and Technology,\newline
\indent University of Cape Verde (Uni-CV),
7943-010 Praia, Cabo Verde}
\email{delfim@ua.pt; delfim@unicv.cv\newline 
\indent ORCID: http://orcid.org/0000-0001-8641-2505}


\date{}


\begin{abstract}
We consider a SICA model for HIV transmission on time scales.
We prove permanence of solutions and we derive sufficient
conditions for the existence and uniform asymptotic stability
of a unique positive almost periodic solution of the system
in terms of a Lyapunov function.
\end{abstract}

\subjclass[2020]{Primary 34N05, 39A24; Secondary 92D25}

\keywords{SICA model for HIV transmission, 
time scales, 
permanence,
almost periodic solution,
uniform asymptotic stability}

\maketitle


\section{Introduction}

In 2015, the deterministic SICA model was first presented
as a sub-system of a TB-HIV/AIDS co-infection model \cite{[Silva]}.
One of the primary objectives of SICA models is to demonstrate
how some of the fundamental relationships between epidemiological
variables and the general pattern of the AIDS epidemic can be clarified
using a straightforward mathematical model \cite{May}. The celebrated
SICA mathematical model \cite{MyID:471,[Torres2],[Lotfi],[Torres],MyID:481}
divides the total human population into four compartments, namely
\begin{itemize}
\item $S(t)$: susceptible individuals at time $t$;
\item $I(t)$: HIV-infected individuals with no clinical symptoms
of AIDS but able to transmit HIV to other individuals at time $t$;
\item $C(t)$: HIV-infected individuals under antiretroviral therapy (ART),
the so called chronic stage with a viral load remaining low at time $t$;
\item $A(t)$: HIV-infected individuals with AIDS clinical symptoms at time $t$.
\end{itemize}
Under some realistic assumptions, the dynamics of the disease proliferation
in a community is then translated into a mathematical model given by the following system
of four ordinary differential equations \cite{[Silva],[Silva2],[Silva3]}:
\begin{eqnarray}
\label{1}
\left\{
\begin{array}{l l}
\dot{S}(t)=\Lambda-\beta \lambda(t)S(t)-\nu S(t),\\
\dot{I}(t)=\beta \lambda(t) S(t)-(\rho+\phi+\nu)I(t)+\gamma A(t)+\omega C(t),\\
\dot{C}(t)=\phi I(t)-(\omega+\nu)C(t),\\
\dot{A}(t)=\rho I(t)-(\gamma+\nu+d)A(t),
\end{array}
\right.
\end{eqnarray}
where $\Lambda$, $\beta$, $\nu$, $\rho$, $\phi$, $\gamma$, $\omega$
and $d$ are real positive rates,
\begin{itemize}
\item $\Lambda$ is the rate of new susceptible;
\item $\beta$ is the HIV transmission rate;
\item $\nu$ is the natural death rate;
\item $\rho$ is the default treatment rate for $I$ individuals;
\item $\phi$ is the HIV treatment rate for $I$ individuals;
\item $\gamma$ is the AIDS treatment rate;
\item $\omega$ is the default treatment rate for $C$ individuals;
\item $d$ is the AIDS induced death rate;
\end{itemize}
and where the effective contact rate with
people infected with HIV is given by
$$
\lambda(t)=\frac{\beta}{N(t)}(I(t)+\eta_C C(t)+\eta_A A(t))
$$
with
\begin{itemize}
\item $N(t)$ the total population at time $t$, that is,
$$
N(t) = S(t) + I(t) + C(t) + A(t);
$$
\item $0\leq\eta_C \leq 1$ the modification parameter;
\item $\eta_A \geq 1$ the partial restoration parameter
of immune function of individuals with HIV infection
that use correctly the ART treatment.
\end{itemize}

The study of dynamical systems on time scales is now a very
active area of research. The books of Bohner and Peterson
\cite{[b1],[b0]} offer a good introduction with applications
to the time scale calculus along with some advanced topics.
Applications of the time scale calculus can be found in many areas,
including economics \cite{[Martin2],MR3313737,[Tisdell]},
ecology \cite{[khudd],[khudd2],MR3124383} and epidemics
\cite{[Martin],MR4185230,MR4378676}.
Here we generalize the SICA model \eqref{1} by considering
dynamic equations on time scales and study it using
the time-scale theory. By doing it, we unify the continuous
and discrete-time models \cite{MyID:481}, generalizing it also
to other contexts like the quantum \cite{MR1865777,MR3184533}
or mixed/hybrid settings \cite{MR3525237,MR4252546}.

Recently, Prasad and Khuddush proved the existence and uniform
asymptotic stability of positive and almost periodic solutions
for a 3-species Lotka--Volterra competitive system on time scales \cite{[khudd2]}.
Moreover, they also studied the permanence and positive almost periodic solutions
of a $n$-species Lotka--Volterra system on time scales \cite{[khudd]}.
Motivated by these works, here we investigate the permanence and uniform asymptotic
stability of the unique positive almost periodic solution
of the following SICA model on time scales:
\begin{eqnarray}
\label{1.1}
\left\{ \begin{array}{l l}
x_{1}^{\Delta}(t)=\Lambda-\beta \lambda(t)x_{1}^{\sigma}(t)-\nu x_{1}^{\sigma}(t),\\
x_{2}^{\Delta}(t)=\beta \lambda(t) x_{1}(t)-(\rho+\phi+\nu)x_{2}^{\sigma}(t)
+\gamma x_{4}(t)+\omega x_{3}(t),\\
x_{3}^{\Delta}(t)=\phi x_{2}(t)-(\omega+\nu)x_{3}^{\sigma}(t),\\
x_{4}^{\Delta}(t)=\rho x_{2}(t)-(\gamma+\nu+d)x_{4}^{\sigma}(t),
\end{array}
\right.
\end{eqnarray}
where $t\in\mathbb{T}^{+}$, with $\mathbb{T}^{+}$ a nonempty closed subset of
$\mathbb{R}^{+}=]0,+\infty[$.

Note that, in our notation,
$(x_{1}(t),x_{2}(t),x_{3}(t),x_{4}(t))$ is interpreted as $(S(t),I(t),C(t),A(t))$.

The paper is organized as follows.
Section~\ref{sec2} gives a brief introduction to the time-scale
theory and we also recall there the definition of an almost periodic function
with respect to time, uniformly for the state variables, and
the lemma of Lyapunov that gives a sufficient condition for the
existence of a unique almost periodic solution that is uniformly asymptotically stable.
In Section~\ref{sec3}, we derive sufficient conditions for system (\ref{1.1}) to be permanent.
Finally, in Section~\ref{sec4}, we establish sufficient conditions for the existence
and uniform asymptotic stability of the unique positive almost periodic solution
to the SICA system (\ref{1.1}). Our results seem to be new even for standard
time scales. We end with an example (Example~\ref{ex01})
and a brief conclusion (Section~\ref{sec5}).


\section{Some preliminaries}
\label{sec2}

For more background on the theory of time scales
than the one given here we recommend the books
\cite{[b1],[b0]}. A time scale $\mathbb{T}$
is a nonempty closed subset of the real numbers
$\mathbb{R}$. The time scale $\mathbb{T} $ has the topology that it inherits
from the real numbers with the standard topology. Let $f$ be a function
defined on $ \mathbb{T}^{+}$. We set
$$
f^{L} = \inf \left\lbrace f(t): t \in \mathbb{T}^{+}\right\rbrace
\quad \text{and} \quad
f^{U} = \sup \left\lbrace f(t): \, t \in \mathbb{T}^{+}\right\rbrace.
$$

\begin{definition}[See \cite{[b1]}]
The jump operators $\sigma, \rho: \, \mathbb{T} \longrightarrow \mathbb{T} $
and the graininess function $ \mu: \mathbb{T}\longrightarrow \mathbb{R}^{+} $
are defined, respectively, by
$$
\sigma(t)=\inf\left\lbrace \tau \in \mathbb{T}: \tau >t\right\rbrace , \;
\rho(t)=\sup \left\lbrace \tau \in \mathbb{T}: \tau < t\right\rbrace, \;
\text{ and } \; \mu(t)= \sigma(t)-t
$$
supplemented by  $\inf\emptyset := \sup \mathbb{T}$
and  $\sup \emptyset := \inf \mathbb{T}$.
A point $ t \in \mathbb{T} $ is left-dense, left-scattered,
right-dense, or right-scattered if $ \rho(t)=t$, $\rho(t)<t$,
$\sigma(t)=t$, or $\sigma(t)>t$, respectively.
If $\mathbb{T} $ has a left-scattered maximum $M$, then
we define $\mathbb{T}^{\kappa}=\mathbb{T} \setminus \left\lbrace M \right\rbrace$;
otherwise, $ \mathbb{T}^{\kappa}=\mathbb{T} $. If $\mathbb{T} $
has a right-scattered minimum $ m $, then we define
$ \mathbb{T}_{\kappa} =\mathbb{T} \setminus \left\lbrace m \right\rbrace$;
otherwise, $ \mathbb{T}_{\kappa}=\mathbb{T}$. If $f:\mathbb{T}\rightarrow \mathbb{R}$,
then $f^{\sigma}:\mathbb{T}\rightarrow \mathbb{R}$ is given by
$f^{\sigma}(t)=f(\sigma(t))$ for all $t\in\mathbb{T}$.
\end{definition}

\begin{definition}
Let $f:\mathbb{T}\rightarrow \mathbb{R}$ and
$t\in\mathbb{T}^{\kappa}$. We  define $f^{\Delta}(t)$ to be the number,
provided it exists, with the property that given any
$\varepsilon>0$ there is a neighborhood $U$ of $t$
(i.e., $U=(t-\delta,t+\delta)\cap\mathbb{T}$ for some $\delta>0$) such that
$$
\left|[f^\sigma(t)-f(s)]-f^{\Delta}(t)[\sigma(t)-s]\right|
\leq\varepsilon|\sigma(t)- s| \mbox{ for all  } \,s\in U.
$$
We call $f^{\Delta}(t)$ the delta derivative
of $f$ at $t$. Moreover, we say that $f$ is delta (or Hilger) 
differentiable on $\mathbb{T}^{\kappa}$ provided $f^{\Delta}(t)$ 
exists for all $t\in\mathbb{T}^{\kappa}$.
\end{definition}

\begin{definition}[See \cite{[b1]}]
A function $f: \mathbb{T}\longrightarrow \mathbb{R}$ 
is said to be regressive provided
$1+ \mu(t)f(t)\neq 0 $ for all $ t \in \mathbb{T}^{\kappa}$.
The set of all regressive and rd-continuous functions
(a function $ g: \mathbb{T}\longrightarrow \mathbb{R} $ is called rd-continuous
provided it is continuous at right-dense points in $ \mathbb{T}$
and its left-sided limits exist (finite) at left-dense points in
$ \mathbb{T}$) $ f: \mathbb{T}\longrightarrow \mathbb{R} $
will be denoted by $ \mathcal{R}=\mathcal{R}(\mathbb{T},\mathbb{R})$.
We define the set 
$$ 
\mathcal{R}^{+}= \mathcal{R}^{+}(\mathbb{T},\mathbb{R})
=\left\lbrace f \in \mathcal{R}: 1+\mu(t)f(t)>0
\text{ for all } t \in \mathbb{T}\right\rbrace.
$$
\end{definition}
Let now $p$ and $q\in\mathcal{R}$.
We define the circle plus addition $\oplus$ on $\mathcal{R}$ by
$$
(p\oplus q)(t):=p(t)+q(t)+\mu(t)p(t)q(t)
\quad \text{for all}\quad t\in\mathbb{T}
$$
and the circle minus subtraction $\ominus$ on $\mathcal{R}$ by
$$
(p\ominus q)(t):= \frac{p(t)-q(t)}{1+\mu(t)q(t)}
\quad \text{for all}\quad t\in\mathbb{T}.
$$
The time scales exponential function $e_{p}(\cdot,t_{0})$
is defined for $p\in\mathcal{R}$ and $t_{0}\in\mathbb{T}$
as the unique solution of the initial value problem
$$
y^{\Delta}=p(t)y \,\,\text{and} \,\,
y(t_{0})=1 \,\,\text{on}\,\,\mathbb{T}.
$$
One has
$$
e_{p}(\cdot,t_{0})e_{q}(\cdot,t_{0})
=e_{p\oplus q}(\cdot,t_{0}),
\ \frac{e_{p}(\cdot,t_{0})}{e_{q}(\cdot,t_{0})}
=e_{p\ominus q}(\cdot,t_{0})
\ \text{and} \ 
e_{\ominus q}(\cdot,t_{0})=\frac{1}{e_{q}(\cdot,t_{0})}.
$$

\begin{lemma}[See \cite{[b4]}]
\label{l2.3}
Assume that $a >0$, $b>0$, and that $-a \in \mathcal{R}^{+}$.
Then,
$$
y^{\Delta}(t)\geq (\leq) \ b-a y^{\sigma}(t), \; y(t)>0,
\; \; t \in [t_{0},\infty)_{\mathbb{T}}
$$
implies that
$$
y(t) \geq (\leq) \ \dfrac{b}{a} \left[ 1
+\left( \dfrac{a y(t_{0})}{b}-1\right)e_{(\ominus a)}(t,t_{0})\right], \;
t \in [t_{0}, \infty)_{\mathbb{T}}.
$$
\end{lemma}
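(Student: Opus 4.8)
The plan is to reduce the given dynamic inequality to a monotonicity statement for an auxiliary product and then read off the bound. I will treat the two cases ($\geq$ and $\leq$) in parallel, since they differ only in the direction of every inequality.

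First I would exploit the fact that the constant $y\equiv b/a$ is an equilibrium of the associated equation $y^{\Delta}=b-ay^{\sigma}$: indeed $y^{\Delta}=0$ and $b-a(b/a)=0$. This motivates the substitution $z(t):=y(t)-b/a$, for which $z^{\Delta}=y^{\Delta}$ and $z^{\sigma}=y^{\sigma}-b/a$. Substituting into $y^{\Delta}\geq b-ay^{\sigma}$ and cancelling the constants turns the hypothesis into
\[
z^{\Delta}(t)+a\,z^{\sigma}(t)\geq 0
\]
(with the reversed inequality in the $\leq$ case). Since adding $b/a$ and factoring out $b/a$ converts $z(t)\geq z(t_0)\,e_{\ominus a}(t,t_0)$ into precisely the asserted bound, it suffices to prove this last inequality.

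The key step is to recognise $z^{\Delta}+a\,z^{\sigma}$ as an exact delta-derivative up to a positive factor. Because $a>0$ and $\mu(t)\geq 0$, we have $1+\mu(t)a>0$, so $a\in\mathcal{R}^{+}$ and hence $e_{a}(\cdot,t_0)>0$ on $[t_0,\infty)_{\mathbb{T}}$, with $e_{a}^{\Delta}=a\,e_{a}$ and $e_{a}^{\sigma}=(1+\mu a)\,e_{a}$. Using the product rule $(fg)^{\Delta}=f^{\Delta}g^{\sigma}+fg^{\Delta}$ with $f=z$ and $g=e_{a}(\cdot,t_0)$, together with $e_{a}^{\sigma}=(1+\mu a)e_{a}$ and $z^{\sigma}=z+\mu z^{\Delta}$, I would compute
\[
\bigl(z\,e_{a}(\cdot,t_0)\bigr)^{\Delta}
=(1+\mu a)\,z^{\Delta}e_{a}+a\,z\,e_{a}
=e_{a}\,\bigl(z^{\Delta}+a\,z^{\sigma}\bigr).
\]
By the reduced hypothesis and $e_{a}>0$, the right-hand side is nonnegative (resp.\ nonpositive), so $t\mapsto z(t)\,e_{a}(t,t_0)$ is nondecreasing (resp.\ nonincreasing) on $[t_0,\infty)_{\mathbb{T}}$.

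Finally I would integrate this monotonicity. For $t\geq t_0$ it gives $z(t)\,e_{a}(t,t_0)\geq z(t_0)\,e_{a}(t_0,t_0)=z(t_0)$ (resp.\ $\leq$); dividing by $e_{a}(t,t_0)>0$ and using $e_{\ominus a}(\cdot,t_0)=1/e_{a}(\cdot,t_0)$ yields $z(t)\geq z(t_0)\,e_{\ominus a}(t,t_0)$, and undoing the substitution $z=y-b/a$ produces the claim. I expect the product-rule identity in the third paragraph to be the only real obstacle: one must carefully move the hypothesis into its $\sigma$-shifted form $z^{\Delta}+a\,z^{\sigma}$ and match it against the product rule via $e_{a}^{\sigma}=(1+\mu a)e_{a}$. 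The standing hypotheses $a,b>0$ and $-a\in\mathcal{R}^{+}$ serve to keep every exponential that appears positive, regressive, and well defined; the positivity of $e_{a}$ actually used in the division step already follows from $a>0$ alone.
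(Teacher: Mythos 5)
Your argument is correct. Note that the paper itself gives no proof of this lemma --- it is quoted verbatim from Hu and Wang \cite{[b4]} --- so there is nothing internal to compare against; but your proof is the standard one for such comparison results, and it checks out line by line: the shift $z=y-b/a$ reduces the hypothesis to $z^{\Delta}+az^{\sigma}\geq 0$, the identity $\bigl(z\,e_{a}(\cdot,t_0)\bigr)^{\Delta}=z^{\Delta}e_{a}^{\sigma}+z\,e_{a}^{\Delta}=e_{a}\bigl(z^{\Delta}+az^{\sigma}\bigr)$ is exactly right, and positivity of $e_{a}(\cdot,t_0)$ (from $a>0$, hence $a\in\mathcal{R}^{+}$) justifies the final division. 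Your closing observation is also accurate: the hypothesis $-a\in\mathcal{R}^{+}$ is not needed for this derivation (it matters later in the paper, where one wants $e_{\ominus a}(t,t_0)\to 0$ to extract the asymptotic bounds $M_i$, $m_i$), and the only cosmetic difference from the argument in \cite{[b4]} is that your substitution $z=y-b/a$ spares you from integrating $b\,e_{a}(\tau,t_0)$ explicitly.
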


\begin{definition}[See \cite{[Li]}]
A time scale $ \mathbb{T} $ is called an almost periodic time scale if
$$
\Pi= \left\lbrace \tau \in \mathbb{R}: \,
t+\tau \in \mathbb{T} \text{ for all } \,
t \in \mathbb{T}\right\rbrace  \neq \left\lbrace 0 \right\rbrace.
$$
\end{definition}

\begin{definition}[See \cite{[Li]}]
Let $ \mathbb{T} $ be an almost periodic time scale.
A function $ x \in C( \mathbb{T},\mathbb{R}^{n} )  $
is called an almost periodic function if the
$\varepsilon$-translation set of $ x $,
$$
E\left\lbrace \varepsilon, x\right\rbrace
=\left\lbrace \tau \in \Pi :
\mid x(t+\tau)-x(t)\mid
< \varepsilon  \text{ for all }
t \in \mathbb{T} \right\rbrace,
$$
is a relatively dense set in $ \mathbb{T} $, that is,
for all $ \varepsilon >0 $ there exists a
constant $ l(\varepsilon)>0 $ such that each interval of length
$ l(\varepsilon)$ contains a
$\eta(\varepsilon) \in E \left\lbrace \varepsilon,x\right\rbrace$
such that $ \lvert x(t+\tau)-x(t)\lvert < \varepsilon$
for all $ t \in \mathbb{T}$.
Moreover, $ \tau $ is called the $\varepsilon$-translation
number of $x(t)$ and $l(\varepsilon) $ is called the inclusion
length of $ E\left\lbrace \varepsilon,x\right\rbrace$.
\end{definition}

\begin{definition}[See \cite{[Li]}]
Let $ \mathbb{D} $ be an open set in $ \mathbb{R}^{n} $
and let $ \mathbb{T} $ be a positive almost periodic time scale.
A function $ f \in C(\mathbb{T}\times \mathbb{D},\mathbb{R}^{n}) $
is called an almost periodic function in $ t \in \mathbb{T} $,
uniformly for $ x \in \mathbb{D} $, if the $ \varepsilon $-translation
set of $f$,
$$
E \left\lbrace \varepsilon, f, \mathbb{S}\right\rbrace
= \left\lbrace \tau \in \Pi: \mid f(t+\tau)-f(t)\mid <\varepsilon
\, \text{ for all } \, (t,x)\in \mathbb{T}\times \mathbb{S} \right\rbrace,
$$
is a relatively dense set in $ \mathbb{T} $ for all  $ \varepsilon>0 $
and for each compact subset $ \mathbb{S} $ of $ \mathbb{D}$
there exists a constant $ l(\varepsilon,\mathbb{S})>0 $
such that each interval of length $ l(\varepsilon,\mathbb{S}) $
contains $ \tau(\varepsilon,\mathbb{S}) \in E\left\lbrace \varepsilon,
f,\mathbb{S}\right\rbrace  $ such that
$$
\mid f(t+\tau,x)-f(t,x)\mid < \varepsilon \, \,
\text{ for all } \, \,
(t,x)\in \mathbb{T} \times \mathbb{S}.
$$
\end{definition}

Consider the system
\begin{equation}
\label{eqly}
x^{\Delta}(t)=h(t,x)
\end{equation}
and its associate product system
$$
x^{\Delta}(t)=h(t,x), \, \, z^{\Delta}(t)=h(t,z),
$$
where
$h: \mathbb{T}^{+} \times \mathbb{S}_{B}
\longrightarrow \mathbb{R}^{n}, \; \mathbb{S}_{B}
= \left\lbrace   x \in \mathbb{R}^{n} : \, \| x \| < B \right\rbrace $
and $ h(t,x) $ is almost periodic in $ t $ uniformly
for $ x \in \mathbb{S}_{B} $ and continuous in $ x $.

\begin{lemma}[See \cite{[b18]}]
\label{L2.2}
Suppose that there exists a Lyapunov function $V(t,x,z) $ defined on
$\mathbb{T}^{+} \times \mathbb{S}_{B}\times \mathbb{S}_{B} $, that is,
there exists a function $V(t,x,z) $ satisfying the following conditions:
\begin{enumerate}
\item $a(\| x-z\|) \leq V(t,x,z)\leq b(\| x-z \|),
\; \text{ where } \; a,b \in \mathbb{K}$ with
$$
\mathbb{K}=\left\lbrace  \alpha \in C(\mathbb{R}^{+},
\mathbb{R}^{+}): \alpha(0)=0,  \text{ and } \; \alpha \;
\text{ increasing } \right\rbrace;
$$

\item $| V(t,x,z)-V(t,x_{1},z_{1}) |
\leq L (\| x-x_{1}\| +\| z-z_{1}\|),$
where  $L >0$  is  a  constant;

\item $D^{+}V^{\Delta}(t,x,z)\leq -c V(t,x,z)$,
where $c>0$ and $-c \in \mathcal{R^{+}}$.
\end{enumerate}
Furthermore, if there exists a solution $ x(t) \in \mathbb{S} $
of system (\ref{eqly}) for $ t \in \mathbb{T^{+}} $,
where $ \mathbb{S}\cup \mathbb{S}_{B} $  is a compact set,
then there  exist a unique almost periodic solution
$ f(t) \in \mathbb{S}$ of system (\ref{eqly}),
which is uniformly asymptotically stable.
\end{lemma}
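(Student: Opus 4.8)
The plan is to extract from condition (3) an exponential contraction for the Lyapunov function evaluated along any pair of solutions, and then to build the almost periodic solution as a uniform limit of translates of the given bounded solution $x(t)$, using that contraction to force the relevant sequence to be Cauchy. Throughout I would work with the associate product system $x^{\Delta}=h(t,x),\ z^{\Delta}=h(t,z)$ introduced before the statement.

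First I would observe that if $u(t)$ and $v(t)$ are any two solutions of (\ref{eqly}) remaining in $\mathbb{S}_{B}$, then the scalar function $W(t):=V(t,u(t),v(t))$ is nonnegative and, by condition (3), satisfies $D^{+}W^{\Delta}(t)\leq -c\,W(t)$. Since $c>0$ and $-c\in\mathcal{R}^{+}$, a comparison argument of the same type as Lemma~\ref{l2.3} gives the exponential estimate
$$W(t)\leq W(t_{0})\,e_{-c}(t,t_{0}),\qquad t\geq t_{0},$$
and $e_{-c}(t,t_{0})\to 0$ as $t\to\infty$ because $-c\in\mathcal{R}^{+}$ makes the integrand $\xi_{\mu}(-c)$ negative. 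Combining with the lower bound in condition (1), namely $a(\|u(t)-v(t)\|)\leq W(t)$, together with $a\in\mathbb{K}$ increasing and $a(0)=0$, this forces $\|u(t)-v(t)\|\to 0$; reading the same chain of inequalities quantitatively, with both $a$ and $b$ from condition (1), already yields the uniform asymptotic stability of each solution relative to every other, which will later supply both uniqueness and the stability conclusion.

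Next I would construct the almost periodic solution. Using that $h$ is almost periodic in $t$ uniformly on the compact set $\mathbb{S}\cup\mathbb{S}_{B}$, I would select a sequence $\{\tau_{n}\}\subset\Pi$ with $\tau_{n}\to\infty$ consisting of $\varepsilon_{n}$-translation numbers of $h$ with $\varepsilon_{n}\downarrow 0$, and set $x_{n}(t):=x(t+\tau_{n})$. Each $x_{n}$ solves a system whose right-hand side differs from $h$ by at most $\varepsilon_{n}$; inserting the difference of two such translates into $W$ and using the contraction estimate together with the Lipschitz bound in condition (2), I would show that $\{x_{n}\}$ is uniformly Cauchy on compact subsets of $\mathbb{T}^{+}$. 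Compactness of $\mathbb{S}\cup\mathbb{S}_{B}$ keeps the limit $f(t):=\lim_{n}x_{n}(t)$ in $\mathbb{S}$, and passing to the limit in the delta-integral form of (\ref{eqly}), using continuity of $h$ in $x$ and the uniform convergence of the shifts of $h$, shows that $f$ is itself a solution. A standard translation argument then upgrades $f$ to an almost periodic function, since the same contraction controls $\|f(t+\tau)-f(t)\|$ in terms of the translation error of $h$.

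Finally, uniqueness and uniform asymptotic stability of $f$ follow from the first step. If $g$ were a second almost periodic solution lying in $\mathbb{S}$, then $a(\|f(t)-g(t)\|)\leq W(t_{0})\,e_{-c}(t,t_{0})\to 0$ as $t\to\infty$, so $\|f(t)-g(t)\|\to 0$; since $\|f-g\|$ is almost periodic and an almost periodic function that tends to zero at infinity must vanish identically, we get $f\equiv g$. The same inequality, applied to $f$ and an arbitrary perturbed solution with initial data in $\mathbb{S}_{B}$, delivers the uniform asymptotic stability of $f$. I expect the main obstacle to be precisely the construction step: verifying that $\{x_{n}\}$ is genuinely Cauchy requires combining the almost periodicity of $h$ (which controls the vector field only up to $\varepsilon_{n}$) with the Lyapunov contraction (which controls solutions of the \emph{exact} system), so one must track how the $\varepsilon_{n}$-perturbation propagates through $W$ via condition (2) and confirm that $e_{-c}(t,t_{0})$ decays fast enough to absorb these errors uniformly in $n$. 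Establishing that the limit $f$ is almost periodic, rather than merely a bounded solution, is the delicate part.
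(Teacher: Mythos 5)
This lemma is not proved in the paper at all: it is imported verbatim from the cited reference \cite{[b18]} (Zhang and Li) and used as a black box, so there is no in-paper proof to compare your attempt against. That said, your outline is the standard Yoshizawa-type argument that the cited source itself follows: (i) run the time-scale Gronwall/comparison inequality on $W(t)=V(t,u(t),v(t))$ to get $W(t)\leq W(t_0)\,e_{-c}(t,t_0)\to 0$ (your justification that $e_{-c}(t,t_0)\to 0$ via $\xi_{\mu}(-c)\leq -c$ is the right one), (ii) squeeze with $a\in\mathbb{K}$ to contract any two solutions, (iii) build the almost periodic solution as a limit of translates $x(t+\tau_n)$ along $\varepsilon_n$-translation numbers of $h$, and (iv) get uniqueness from the fact that an almost periodic function vanishing at infinity is identically zero. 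All of these steps are correct in outline.

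The one substantive caveat is the one you flag yourself: step (iii) is where essentially all the work is, and your proposal only names it. To show $\{x(\cdot+\tau_n)\}$ is Cauchy you must treat $x(t+\tau_n)$ as a solution of the \emph{perturbed} system $y^{\Delta}=h(t+\tau_n,y)$, feed the difference into $V$ via condition (2) to get
$D^{+}W^{\Delta}\leq -cW + 2L\varepsilon_{n,m}$ with $\varepsilon_{n,m}$ the translation error, and then invoke the variation-of-constants estimate to conclude $W(t)\leq W(t_0)e_{-c}(t,t_0)+2L\varepsilon_{n,m}\int_{t_0}^{t}e_{-c}(t,\sigma(s))\Delta s$, where the integral is bounded by $1/c$ uniformly in $t$; only then does the lower bound $a(\|\cdot\|)\leq W$ give a uniform Cauchy estimate. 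Writing this out (and the analogous estimate showing $f(t+\tau)-f(t)$ is small for $\tau$ an $\varepsilon$-translation number of $h$, which is what makes $f$ almost periodic rather than merely bounded) is required before the proposal counts as a proof; as it stands it is a correct plan with the hardest lemma left as an acknowledged gap.
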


\begin{definition}[See \cite{[khudd]}]
System (\ref{1.1}) is said to be permanent
if there exist positive constants $m$ and $M$
such that 
$$ 
m\leq \liminf_{t \longrightarrow \infty} x_{i}(t)
\leq \limsup_{t \longrightarrow \infty}x_{i}(t)\leq M,
\quad i=1,2,3,4,
$$
for any solution $(x_{1}(t),x_{2}(t),x_{3}(t),x_{4}(t))$ of (\ref{1.1}).
\end{definition}


\section{Permanence of positives solutions}
\label{sec3}

The principal objective of this section is to establish
sufficient conditions for system (\ref{1.1}) to be permanent.
Let $ t_{0} \in \mathbb{T} $ be a fixed positive initial time.
We introduce the following assumption for (\ref{1.1}):

\begin{itemize}
\item[$(H_{1})$] $\lambda(t)$
is a bounded and almost periodic function
satisfying
$$
0< \lambda^{L}\leq \lambda(t) \leq \lambda^{U}.
$$
\end{itemize}

\begin{lemma}
\label{l3.1}	
Suppose hypothesis $(H_{1})$ holds. Then, for any positive solution
$(x_{1}(t),x_{2}(t),x_{3}(t),x_{4}(t))$ of system (\ref{1.1}),
there exists positive constants $ M$ and $T$ such that
$ x_{i}(t)\leq M$, $i=1,2,3,4$,  for $ t\geq T$.
\end{lemma}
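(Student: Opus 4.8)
The plan is to bound the four components one at a time, reducing each scalar dynamic equation of \eqref{1.1} to the comparison form required by Lemma~\ref{l2.3} and then passing to limits superior. I first treat $x_1$, which decouples from the rest. Writing the first equation as $x_1^{\Delta}(t)=\Lambda-(\beta\lambda(t)+\nu)x_1^{\sigma}(t)$ and using $(H_1)$ together with positivity of the solution gives $x_1^{\Delta}(t)\le \Lambda-(\beta\lambda^{L}+\nu)x_1^{\sigma}(t)$. Since $-(\beta\lambda^{L}+\nu)\in\mathcal{R}^{+}$, Lemma~\ref{l2.3} yields an explicit upper estimate whose exponential term vanishes as $t\to\infty$, so that $\limsup_{t\to\infty}x_1(t)\le \Lambda/(\beta\lambda^{L}+\nu)=:M_1$; in particular $x_1(t)\le M_1+\varepsilon$ for $t$ large.

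Next I propagate this bound through the remaining three equations, which couple only through nonnegative gain terms. From the third and fourth equations, $x_3^{\Delta}(t)\le \phi\,x_2(t)-(\omega+\nu)x_3^{\sigma}(t)$ and $x_4^{\Delta}(t)\le \rho\,x_2(t)-(\gamma+\nu+d)x_4^{\sigma}(t)$, so Lemma~\ref{l2.3} (with an eventual upper bound of $x_2$ playing the role of the constant source) gives
\[
\limsup_{t\to\infty}x_3(t)\le \frac{\phi}{\omega+\nu}\,\overline{x_2},
\qquad
\limsup_{t\to\infty}x_4(t)\le \frac{\rho}{\gamma+\nu+d}\,\overline{x_2},
\]
where $\overline{x_2}:=\limsup_{t\to\infty}x_2(t)$. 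Feeding these, together with $x_1\le M_1$, into the second equation written as $x_2^{\Delta}(t)\le \beta\lambda^{U}M_1+\gamma\,x_4(t)+\omega\,x_3(t)-(\rho+\phi+\nu)x_2^{\sigma}(t)$ and applying Lemma~\ref{l2.3} once more produces the self-consistent inequality
\[
\Big[(\rho+\phi+\nu)-\frac{\gamma\rho}{\gamma+\nu+d}-\frac{\omega\phi}{\omega+\nu}\Big]\overline{x_2}\le \beta\lambda^{U}M_1 .
\]
The bracketed coefficient is strictly positive for free: since $\nu,d>0$ one has $\frac{\gamma\rho}{\gamma+\nu+d}<\rho$ and $\frac{\omega\phi}{\omega+\nu}<\phi$, whence the bracket exceeds $\nu>0$. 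This pins down a finite $M_2$ bounding $\overline{x_2}$, and then $M_3:=\frac{\phi}{\omega+\nu}M_2$ and $M_4:=\frac{\rho}{\gamma+\nu+d}M_2$. Taking $M:=\max\{M_1,M_2,M_3,M_4\}$ and $T$ beyond which all four eventual bounds hold gives the claim.

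The main obstacle is the circularity in the $x_2,x_3,x_4$ estimates: the bound for $x_2$ is phrased through the a priori possibly infinite quantities $\overline{x_3},\overline{x_4}$, which in turn are expressed through $\overline{x_2}$, so the limit-superior algebra is only legitimate once these limits superior are known to be finite. I would secure finiteness by a bootstrap: for fixed $\varepsilon>0$ choose $T_1$ with $x_1\le M_1+\varepsilon$ on $[T_1,\infty)_{\mathbb{T}}$, then compare $(x_2,x_3,x_4)$ on $[T_1,\infty)_{\mathbb{T}}$ with the solution of the associated linear dynamic system obtained by freezing $x_1$ at $M_1+\varepsilon$. Because every off-diagonal coupling coefficient ($\gamma,\omega,\phi,\rho$) is nonnegative, that comparison system is quasi-monotone, its solutions are dominated by the corresponding equilibrium, and this equilibrium is precisely the fixed point solving the bracketed inequality above; letting $\varepsilon\to0$ then validates all the limit-superior estimates at once. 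The remaining work—checking the regressivity conditions $-(\beta\lambda^{L}+\nu),\,-(\rho+\phi+\nu),\,-(\omega+\nu),\,-(\gamma+\nu+d)\in\mathcal{R}^{+}$ needed to invoke Lemma~\ref{l2.3}, and the routine $\varepsilon$-bookkeeping—is straightforward.
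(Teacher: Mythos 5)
Your treatment of $x_1$ coincides with the paper's, but for the coupled block $(x_2,x_3,x_4)$ you take a genuinely different route. The paper never faces the circularity you describe: it bounds the gain terms in the second equation directly by $\gamma x_4(t)+\omega x_3(t)\leq(\gamma+\omega)\Lambda/\nu$, using the a priori population bound $N(t)\leq\Lambda/\nu$ (which it asserts later, in the definition of $\Omega$, without proof inside the lemma), and then applies the scalar comparison Lemma~\ref{l2.3} to the four equations strictly in sequence, obtaining $M_2$ without any reference to $M_3,M_4$. Your alternative — a self-consistent system of limit-superior inequalities whose determinant-like bracket $(\rho+\phi+\nu)-\tfrac{\gamma\rho}{\gamma+\nu+d}-\tfrac{\omega\phi}{\omega+\nu}>\nu>0$ — is arithmetically correct and has the virtue of not relying on the unproved bound $N(t)\leq\Lambda/\nu$, which on a general time scale is itself not immediate because summing the equations mixes $x_i$ and $x_i^{\sigma}$ terms.

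The one place where your argument is still a sketch rather than a proof is the step that carries the real weight: resolving the circularity by comparing $(x_2,x_3,x_4)$ with a frozen quasi-monotone linear dynamic system and asserting that its solutions are dominated by the equilibrium. The only comparison tool available in the paper is the scalar Lemma~\ref{l2.3}; a vector comparison principle for quasi-monotone systems on time scales is not supplied, and it is not entirely routine here because the coupling terms $\gamma x_4$, $\omega x_3$, $\phi x_2$, $\rho x_2$ enter at $t$ while the decay terms enter at $\sigma(t)$, so monotonicity of the comparison flow and convergence of the frozen system to its equilibrium both require checking (including the relevant regressivity conditions). You should either prove or cite such a vector comparison result, or else adopt the paper's shortcut of first establishing an eventual bound on $x_3$ and $x_4$ (e.g.\ via $N(t)\leq\Lambda/\nu$, itself requiring a short argument) so that the scalar lemma suffices.
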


\begin{proof}
Let $Z(t)=\left(x_{1}(t),x_{2}(t),x_{3}(t),x_{4}(t)\right)$
be any positive solution of system (\ref{1.1}). From the $i$th
equation of system (\ref{1.1}), we have
\begin{eqnarray}
\left
\{\begin{array}{l l}
x_{1}^{\Delta}(t)\leq \Lambda -(\beta \lambda^{L}+\nu)x_{1}^{\sigma}(t),\\
x_{2}^{\Delta}(t) \leq \beta \lambda^{U} M_{1}+(\gamma+\omega)
\dfrac{\Lambda}{\nu}-(\rho+\phi+\nu)x_{2}^{\sigma}(t),\\
x_{3}^{\Delta}(t)\leq \phi M_{2}-(\omega+\nu)x_{3}^{\sigma}(t),\\
x_{4}^{\Delta}(t)\leq \rho M_{2}-(\gamma+\nu+d)x_{4}^{\sigma}(t).
\end{array}
\right.
\end{eqnarray}
Hence, by Lemma~\ref{l2.3}, there exist positive constants $M_{i} $
and $ T_{i}$ such that for any positive solution
$(x_{1}(t),x_{2}(t),x_{3}(t),x_{4}(t))$  of system (\ref{1.1}),
we have
$$
x_{1}(t)\leq \dfrac{\Lambda}{\beta \lambda^{L}+\nu}
\left[ 1+\left( \dfrac{(\beta \lambda^{L}+\nu)
x_1(t_{0})}{\Lambda}-1\right)
e_{\ominus(\beta \lambda^{L}+\nu)}(t,t_{0}) \right].
$$
If $-(\beta \lambda^{L} + \nu)<0$, then
$e_{\ominus(\beta \lambda^{L}+\nu)}(t,t_{0})
\longrightarrow 0$ as $t \longrightarrow \infty$ and
\begin{eqnarray}
\begin{cases}
x_{1}(t) \leq M_1:= \Lambda/(\beta \lambda^{L}+\nu),
& \text{ for } \, t\geq T_{1},\\
x_{2}(t) \leq M_2:=\beta (\lambda^{U}M_{1}+(\gamma+\omega)
\dfrac{\Lambda}{\nu})/(\rho+\phi+\nu),
& \text{ for } \, t \geq T_{2},\\
x_{3}(t)\leq M_3:=\phi M_{2}/(\omega+\nu),
&  \text{ for } \, t\geq T_{3}, \\
x_{4}(t)\leq  M_4:=\rho M_{2}/(\gamma+\nu+d),
& \text{ for } \, t\geq T_{4}.
\end{cases}
\end{eqnarray}
Let $ M=\displaystyle{\max_{1\leq i \leq 4}}\left\lbrace M_{i}\right\rbrace$
and $ T=\displaystyle{\max_{1\leq i \leq 4}}\left\lbrace T_{i}\right\rbrace$.
Then, $ x_{i}(t)\leq M$, $i=1,2,3,4$, for all $ t \geq T$.
\end{proof}

\begin{lemma}
\label{lem1}
Suppose that $ (H_{1})$ holds. Then, system (\ref{1.1}) is permanent.
\end{lemma}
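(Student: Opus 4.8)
The plan is to establish permanence by proving the lower bounds to complement the upper bounds already obtained in Lemma~\ref{l3.1}. Since Lemma~\ref{l3.1} gives positive constants $M$ and $T$ with $x_i(t)\le M$ for $t\ge T$, it remains to produce a positive constant $m$ and a time $T'$ such that $\liminf_{t\to\infty} x_i(t)\ge m$ for each $i$. The natural strategy is to run the same argument as in Lemma~\ref{l3.1}, but using the lower-bound half of Lemma~\ref{l2.3}: I would bound each right-hand side from below by replacing the coupling terms with their already-established bounds and reading off a differential inequality of the form $x_i^{\Delta}(t)\ge b_i-a_i x_i^{\sigma}(t)$ with $a_i,b_i>0$ and $-a_i\in\mathcal{R}^{+}$.

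First I would treat $x_1$. Using $(H_1)$ and the upper bound, $\lambda(t)\le\lambda^{U}$ gives
\begin{equation*}
x_1^{\Delta}(t)\ge \Lambda-(\beta\lambda^{U}+\nu)\,x_1^{\sigma}(t),
\end{equation*}
so Lemma~\ref{l2.3} yields $\liminf_{t\to\infty}x_1(t)\ge m_1:=\Lambda/(\beta\lambda^{U}+\nu)$ once $-(\beta\lambda^{U}+\nu)\in\mathcal{R}^{+}$, because the exponential term tends to $0$. Next, for $x_2$ I would discard the (nonnegative) inflow terms $\gamma x_4$ and $\omega x_3$ and use the lower bound just obtained for $x_1$ together with $\lambda(t)\ge\lambda^{L}$ to get $x_2^{\Delta}(t)\ge \beta\lambda^{L}m_1-(\rho+\phi+\nu)x_2^{\sigma}(t)$, producing $m_2:=\beta\lambda^{L}m_1/(\rho+\phi+\nu)$. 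Finally $x_3$ and $x_4$ follow directly from the lower bound on $x_2$: $x_3^{\Delta}(t)\ge \phi m_2-(\omega+\nu)x_3^{\sigma}(t)$ gives $m_3:=\phi m_2/(\omega+\nu)$, and similarly $m_4:=\rho m_2/(\gamma+\nu+d)$. Setting $m=\min_{1\le i\le 4}\{m_i\}$ and combining with Lemma~\ref{l3.1} completes the verification of the definition of permanence.

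The one point requiring care, rather than a genuine obstacle, is the ordering and the regressivity hypotheses. The lower bound for $x_2$ must be derived \emph{after} the lower bound for $x_1$ (and the same cascade governs $x_3,x_4$ via $x_2$), exactly mirroring the top-down use of the $M_i$ in the preceding lemma; one should also confirm that each coefficient $-a_i$ lies in $\mathcal{R}^{+}$, i.e. $1-\mu(t)a_i>0$, so that Lemma~\ref{l2.3} applies and the relevant exponential functions decay to zero. These regressivity conditions are the analogue of the sign condition $-(\beta\lambda^{L}+\nu)<0$ used in Lemma~\ref{l3.1} and would be stated as standing assumptions on the graininess. Once the $\liminf$ estimates are in place, the conclusion $m\le\liminf x_i\le\limsup x_i\le M$ holds for every positive solution, which is precisely permanence.
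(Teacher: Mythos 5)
Your proposal is correct and follows essentially the same route as the paper: the paper likewise bounds each equation below by $x_i^{\Delta}(t)\ge b_i-a_i x_i^{\sigma}(t)$ (using $\lambda^{U}$ for $x_1$, then cascading $m_1\to m_2\to m_3,m_4$ after dropping the nonnegative inflow terms) and applies Lemma~\ref{l2.3} to read off the same constants $m_i$. Your explicit remarks on the ordering of the cascade and on the regressivity conditions $-a_i\in\mathcal{R}^{+}$ are points the paper leaves implicit, and if anything your $\liminf$ phrasing is the more careful one.
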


\begin{proof}
Let  $ Z(t)=(x_{1}(t),x_{2}(t),x_{3}(t),x_{4}(t))$
be any positive solution of system (\ref{1.1}).
From the $i$th equation of system (\ref{1.1}),
we have
\begin{eqnarray}
\left\{ \begin{array}{l l}
x_{1}^{\Delta}(t)\geq \Lambda -(\beta \lambda^{U}+\nu)x_{1}^{\sigma}(t),\\
x_{2}^{\Delta}(t) \geq \beta \lambda^{L} m_{1}-(\rho+\phi+\nu)x_{2}^{\sigma}(t),\\
x_{3}^{\Delta}(t)\geq \phi m_{2}-(\omega+\nu)x_{3}^{\sigma}(t),\\
x_{4}^{\Delta}(t)\geq  \rho m_{2}-(\gamma+\nu+d)x_{4}^{\sigma}(t).
\end{array}
\right.
\end{eqnarray}
From hypothesis $(H_{1})$ and  Lemma~\ref{l2.3}, there exists
positive constants $m_{i}>0 $  such that for any positive solution
$(x_1(t),x_2(t),x_3(t),x_4(t))$  of system (\ref{1.1})
there exists $ \hat{T_{i}} $ such that
\begin{eqnarray}
\begin{cases}
x_{1}(t)\geq m_1:=  \Lambda/(\beta \lambda^{U}+\nu),
& \text{ for } \, t\geq\hat{T}_1,\\
x_{2}(t) \geq m_2:= (\beta \lambda^{L}m_{1})/(\rho+\phi+\nu),
& \text{ for } \, t\geq\hat{T}_2,\\
x_{3}(t)\geq  m_3:=\phi m_{2}/(\omega+\nu),
& \text{ for } \, t\geq\hat{T}_3,\\
x_{4}(t)\geq  m_4:=\rho m_{2}/(\gamma+\nu+d),
& \text{ for } \, t\geq\hat{T}_4.
\end{cases}
\end{eqnarray}
Let $ m= \min_{1\leq i\leq 4}\left\lbrace m_{i}\right\rbrace  $
and $ \hat{T}=\max_{1 \leq i \leq 4}\left\lbrace \hat{T_{i}}\right\rbrace$.
We conclude that $ x_{i}(t) \geq m$, $i=1,2,3,4$, for all $ t \geq \hat{T}$.
\end{proof}


\section{Uniform asymptotic stability}
\label{sec4}

In this section, we prove sufficient conditions for the
existence and uniform asymptotic stability of the unique positive
almost periodic solution to system (\ref{1.1}). Let us define
\begin{multline*}
\Omega := \Biggl\{ Z(t)=(x_1(t),x_2(t),x_3(t),x_4(t))
\in(\mathbb{R}^{+})^{4}: (x_1(t),x_2(t),x_3(t),x_4(t))\\
\text{ is a solution of } (\ref{1.1}) \text{ with } 0<m\leq x_i\leq M,
i=1,\ldots4, \text{ and } N(t)\leq\frac{\Lambda}{\nu} \Biggr\}.
\end{multline*}
It is clear that $\Omega$ is an invariant set of system (\ref{1.1})
and, by Lemma~\ref{lem1}, we have $\Omega\neq\emptyset$.
We introduce some more notation. Let
\begin{equation*}
\begin{split}
a_1 &:=\beta\lambda^{L}+\nu,\\
a_2 &:=\rho+\phi+\nu,\\
a_3 &:=\omega+\nu,\\
a_4 &:=\gamma+\nu+d,\\
b_1 &:=\left(\beta\lambda^{U}+\frac{\beta^{2}M}{2m}\right),\\
b_2 &:=\left(\rho+\phi\right),\\
b_3 &:=\left(\omega+\frac{\beta^{2}M}{2m}\eta_C\right),\\
b_4 &:=\left(\gamma+\frac{\beta^{2}M}{2m}\eta_A\right).
\end{split}
\end{equation*}
Moreover, let $ \Gamma_{1}:= \displaystyle{\min_{1\leq i\leq 4}} a_{i}$
and $\Gamma_{2}:= \displaystyle{\max_{1\leq i\leq4}}b_{i}$.
In our next result (Theorem~\ref{last:thm}) we assume the following additional hypothesis:
\begin{itemize}
\item[$(H_{2})$] $ \Gamma_{2} < \Gamma_{1} $ with $\Gamma_1,\Gamma_2 \in\mathcal{R}^{+}$.
\end{itemize}

\begin{theorem}
\label{last:thm}
Suppose that $(H_1)$ and $(H_2)$ hold. Then the dynamic system (\ref{1.1})
has a unique almost periodic solution
$Z(t)=(x_{1}(t),x_{2}(t),x_{3}(t),x_{4}(t)) \in \Omega$
that is uniformly asymptotically stable.
\end{theorem}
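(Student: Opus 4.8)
The plan is to deduce the theorem from the Lyapunov criterion of Lemma~\ref{L2.2}, applied to the product system associated with (\ref{1.1}). First I would recast (\ref{1.1}) in the abstract form $x^{\Delta}=h(t,x)$ and check the standing structural hypotheses of that lemma: the right-hand side $h$ is continuous in the state variables (it is rational, with the nonvanishing denominator $N$ kept away from $0$ on $\Omega$), and, since the only explicit time dependence sits in the incidence $\lambda(t)$, hypothesis $(H_1)$ guarantees that $h(t,x)$ is almost periodic in $t$ uniformly for $x$ in each compact subset of the relevant state space. The remaining requirement of Lemma~\ref{L2.2}, the existence of a solution lying in a compact set, is furnished by permanence: by Lemma~\ref{lem1} every positive solution eventually enters the set $\Omega$, which is compact and invariant, so I may take $\mathbb{S}=\Omega$.

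Next I would exhibit the Lyapunov function. The natural choice is
$$
V(t,Z,\tilde Z)=\sum_{i=1}^{4}\bigl|x_i(t)-\tilde x_i(t)\bigr|
$$
for two solutions $Z=(x_1,\dots,x_4)$ and $\tilde Z=(\tilde x_1,\dots,\tilde x_4)$ in $\Omega$. Conditions (1) and (2) of Lemma~\ref{L2.2} are then immediate: condition (1) holds with linear $a,b\in\mathbb{K}$ (by equivalence of norms on $\mathbb{R}^4$), and condition (2) follows from the triangle inequality with a constant $L$. The substance of the proof is condition (3). Here I would compute the upper right Dini delta-derivative $D^+V^{\Delta}$ along the two solutions, equation by equation. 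The self-regulating terms $-a_i x_i^{\sigma}$ in the $i$th equation produce the negative contributions $-a_i\bigl|x_i^{\sigma}-\tilde x_i^{\sigma}\bigr|$, while the transfer terms with rates $\gamma,\omega,\rho,\phi$ and the incidence term $\beta\lambda x_1$ produce nonnegative contributions. Bounding each of the latter by means of the permanence estimates $m\le x_i\le M$ and $N\le\Lambda/\nu$ (together with the lower bound on $N$ inherited from $m$) yields precisely the coefficients $b_i$ defined before the statement. Collecting terms and using $\Gamma_1=\min_i a_i$ and $\Gamma_2=\max_i b_i$, I would arrive at
$$
D^+V^{\Delta}(t,Z,\tilde Z)\le-(\Gamma_1-\Gamma_2)\,V(t,Z,\tilde Z)=-c\,V(t,Z,\tilde Z),
$$
with $c:=\Gamma_1-\Gamma_2>0$ by $(H_2)$; the regressivity $-c\in\mathcal{R}^{+}$ demanded by condition (3) likewise follows from the assumption $\Gamma_1,\Gamma_2\in\mathcal{R}^{+}$. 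An application of Lemma~\ref{L2.2} then delivers a unique almost periodic solution in $\Omega$ that is uniformly asymptotically stable.

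The hard part will be the passage that produces the coefficients $b_i$, namely the estimation of the nonlinear incidence difference $\beta\lambda x_1-\beta\tilde\lambda\,\tilde x_1$ with $\lambda=\tfrac{\beta}{N}\bigl(x_2+\eta_C x_3+\eta_A x_4\bigr)$, since $\lambda$ couples all four components and carries $N$ in its denominator. I would handle it by adding and subtracting suitable cross terms, splitting the difference into contributions along each coordinate and along $N$, and then controlling the denominator from below through the lower permanence bound; this is exactly the mechanism that makes the factor $\tfrac{\beta^{2}M}{2m}$ and the weights $\eta_C,\eta_A$ surface in $b_1$, $b_3$, and $b_4$. A secondary technical point, special to the time-scale setting, is the differentiation of the absolute values $|x_i-\tilde x_i|$: one cannot differentiate termwise, so I would instead invoke the standard Dini-derivative inequality for $|\cdot|$ on time scales together with the identity $y^{\sigma}=y+\mu y^{\Delta}$ to reconcile the $\sigma$-shifted quantities $\bigl|x_i^{\sigma}-\tilde x_i^{\sigma}\bigr|$ with the unshifted $V$, so that the final estimate is genuinely of the form $-c\,V$.
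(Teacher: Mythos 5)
Your proposal follows essentially the same route as the paper: the same Lyapunov function $V(t,Z,\hat Z)=\sum_{i=1}^{4}|x_i(t)-\hat x_i(t)|$ on $\Omega\times\Omega$, the same verification of conditions (1)--(3) of Lemma~\ref{L2.2} using the permanence bounds $m\le x_i\le M$ to control the incidence difference, and the same reconciliation of the $\sigma$-shifted terms via $y^{\sigma}=y+\mu y^{\Delta}$, which in the paper yields the decay constant $\Psi=\frac{\Gamma_1-\Gamma_2}{1+\Gamma_1\mu}>0$ under $(H_2)$. The argument is correct and matches the paper's proof in all essential respects.
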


\begin{proof}
According to Lemma~\ref{l2.3}, every solution 
$$ 
Z(t)=(x_{1}(t),x_{2}(t),x_{3}(t),x_{4}(t)) 
$$
of system (\ref{1.1}) satisfies $ x_{i}^{L} \leq x_{i}(t) \leq x_{i}^{U} , i=1,\ldots,4$,
and $|x_{i}| \leq K_{i}, i=1,\ldots,4 $. Denote
\begin{equation*}
\begin{split}
\parallel Z \parallel
&= \parallel (x_{1}(t),x_{2}(t),x_{3}(t),x_{4}(t))\parallel \\
&= \displaystyle{ \sup_{t \in \mathbb{T^{+}}}} \left( \mid x_{1}(t)\mid
+ \mid x_{2}(t)\mid + \mid x_{3}(t)\mid + \mid x_{4}(t)\mid\right) .
\end{split}
\end{equation*}
Let $ Z(t)=(x_{1}(t),x_{2}(t),x_{3}(t),x_{4}(t))$ and
$\hat{Z}(t) =(\hat{x}_{1}(t),\hat{x}_{2}(t),\hat{x}_{3}(t) ,\hat{x}_{4}(t))$
be two positive solutions of (\ref{1.1}). Then $ \parallel Z \parallel \leq K$
and $ \parallel \hat{Z}\parallel\leq K $, where
$$
K= \displaystyle{\sum_{i=1}^{i=4}}K_{i}.
$$
In view of (\ref{1.1}), we have
\begin{eqnarray*}
\left
\{\begin{array}{l l}
x_{1}^{\Delta}(t)=\Lambda-\beta \lambda(t)x_{1}^{\sigma}(t)-\nu x_{1}^{\sigma}(t),\\
x_{2}^{\Delta}(t)=\beta \lambda(t) x_{1}(t)-(\rho+\phi+\nu)x_{2}^{\sigma}(t)
+\gamma x_{4}(t)+\omega x_{3}(t),\\
x_{3}^{\Delta}(t)=\phi x_{2}(t)-(\omega+\nu)x_{3}^{\sigma}(t),\\
x_{4}^{\Delta}(t)=\rho x_{2}(t)-(\gamma+\nu+d)x_{4}^{\sigma}(t),
\end{array}
\right.
\end{eqnarray*}
and
\begin{eqnarray*}
\left
\{\begin{array}{l l}
\hat{x}_{1}^{\Delta}(t)
=\Lambda-\beta \hat{\lambda}(t)\hat{x}_{1}^{\sigma}(t)-\nu\hat{x}_{1}^{\sigma}(t),\\
\hat{x}_{2}^{\Delta}(t)=\beta\hat{\lambda}(t) \hat{x}_{1}(t)
-(\rho+\phi+\nu)\hat{x}_{2}^{\sigma}(t)+\gamma\hat{x}_{4}(t)+\omega\hat{x}_{3}(t),\\
\hat{x}_{3}^{\Delta}(t)=\phi\hat{x}_{2}(t)-(\omega+\nu)\hat{x}_{3}^{\sigma}(t),\\
\hat{x}_{4}^{\Delta}(t)=\rho\hat{x}_{2}(t)-(\gamma+\nu+d)\hat{x}_{4}^{\sigma}(t).
\end{array}
\right.
\end{eqnarray*}
Define the Lyapunov function $ V(t, Z, \hat{Z})$
on $\mathbb{T^{+}}\times \Omega \times \Omega$ as
$$
V(t,Z,\hat{Z})= \displaystyle{\sum_{i=1}^{i=4}}\mid x_{i}(t)
-\hat{x_{i}}(t)\mid= \displaystyle{\sum_{i=1}^{i=4}}V_i(t),
$$
where $V_i(t)=\mid x_{i}(t)-\hat{x_{i}}(t)\mid $.
The two norms
$$
\parallel Z(t)-\hat{Z} (t)\parallel
= \displaystyle{\sup_{t \in \mathbb{T^{+}}}}
\displaystyle{\sum_{i=1}^{i=4}}
\mid x_{i}(t)-\hat{x_{i}}(t)\mid
$$
and
$$
\parallel Z(t)-\hat{Z} (t)\parallel_{\ast}
=\displaystyle{\sup_{t \in \mathbb{T^{+}}}}
\left( \displaystyle{\sum_{i=1}^{i=4}}\left( x_{i}(t)
-\hat{x_{i}}(t)\right) ^{2}\right) ^{\frac{1}{2}}
$$
are equivalent, that is, there exist two constants
$\eta_{1}$ and $\eta_{2}>0$ such that
$$
\eta_{1}\parallel Z(t)-\hat{Z}(t)\parallel_{\ast}\leq\parallel Z(t)
-\hat{Z}(t)\parallel\leq\eta_{2}\parallel Z(t)-\hat{Z}(t)\parallel_{\ast}.
$$
Hence,
$$
\eta_{1}\parallel Z(t)-\hat{Z}(t)\parallel_{\ast}
\leq V(t,Z,\hat{Z})\leq\eta_{2}\parallel Z(t)
-\hat{Z}(t)\parallel_{\ast}.
$$
Let $a, b \in C(\mathbb{R^{+}},\mathbb{R^{+}})$,
$a(x)=\eta_{1}x$ and $b(x)=\eta_{2}x$.
Then the assumption (i) of Lemma~\ref{L2.2} is satisfied.
Moreover,
\begin{equation*}
\begin{split}
\mid V(t,Z,\hat{Z})- V(t,Z^{\ast},\hat{Z^{\ast}})\mid
&= \left\vert\displaystyle{\sum_{i=1}^{i=4}}
\mid x_{i}(t)-\hat{x_{i}}(t) \mid
- \displaystyle{\sum_{i=1}^{i=4}}
\mid x^{\ast}_{i}(t)-\hat{x^{\ast}_{i}}(t) \mid \right\vert \\
&\leq \displaystyle{\sum_{i=1}^{i=4}}
\mid\left(  x_{i}(t)-\hat{x_{i}}(t) \right)
-\left(  x^{\ast}_{i}(t)-\hat{x^{\ast}_{i}}(t)\right)\mid\\
&\leq \displaystyle{\sum_{i=1}^{i=4}}\mid\left(x_{i}(t)
-x^{\ast}_{i}(t)\right)+\left(  \hat{x^{\ast}}_{i}(t)
-\hat{x_{i}}(t)\right)\mid \\
&\leq \displaystyle{\sum_{i=1}^{i=4}}
\mid \left(  x_{i}(t)-x^{\ast}_{i}(t) \right) \mid
+  \displaystyle{\sum_{i=1}^{i=4}}
\mid  \left(  \hat{x^{\ast}}_{i}(t)-\hat{x_{i}}(t)\right) \mid \\
&\leq L \left( \parallel Z(t)-Z^{\ast}(t)\parallel
+ \parallel \hat{Z}(t)-\hat{Z^{\ast}}(t)\parallel\right),
\end{split}
\end{equation*}
where $ L=1 $, so that condition (ii) of Lemma \ref{L2.2} is also satisfied.
Now, let $ v_{i}(t)=x_{i}(t)-\hat{x_{i}}(t)$, $i=1, \ldots, 4$. 
We compute and estimate the Dini derivative $D^+V^\Delta$ of $V$ 
along the associated product system (\ref{1.1}).   
Using Lemma~4.1 of \cite{wong}, it follows that
$D^{+}V_1^{\Delta}(t) \leq sign(v_1^{\sigma}(t))(v_1(t))^{\Delta}$. 
For more details on  $D^+V_1^{\Delta}(t)$ see \cite{Hong,461}. 
Now, let us begin computing
\begin{eqnarray*}
D^{+}V_1^{\Delta}(t) &\leq& sign(v_1^{\sigma}(t))(v_1(t))^{\Delta} \\
&=& sign(v_1^{\sigma}(t))[-\beta \lambda(t)x_1^{\sigma}(t)
-\nu x_1^{\sigma}(t)+\beta\hat{\lambda}(t)
\hat{x}_1^{\sigma}(t)+\nu \hat{x}_1^{\sigma}(t)]\\
&=&sign(v_1^{\sigma}(t))[-\beta\lambda(t)(x_1^{\sigma}(t)
- \hat{x}_1^{\sigma}(t))-\nu(x_1^{\sigma}(t)- \hat{x}_1^{\sigma}(t))]\\
&&-\beta \hat{x}_1^{\sigma}(t)(\lambda(t)-\hat{\lambda}(t))]\\
&=&sign(v_1^{\sigma}(t))[-(\beta\lambda(t)+\nu)
(\hat{x}_1^{\sigma}(t)- \hat{x}_1^{\sigma}(t))\\
&&-\beta \hat{x}_1^{\sigma}(t)(\lambda(t)-\hat{\lambda}(t))]\\
&\leq& -(\beta\lambda(t)+\nu)\mid\hat{x}_1^{\sigma}(t)
- \hat{x}_1^{\sigma}(t))\mid+\beta \hat{x}_1^{\sigma}(t)
\mid\lambda(t)-\hat{\lambda}(t)\mid\\
&\leq& -(\beta\lambda^{L}(t)+\nu)\mid\hat{x}_1^{\sigma}(t)
- \hat{x}_1^{\sigma}(t))\mid
+\beta \hat{x}_1^{\sigma}(t)\mid\lambda(t)-\hat{\lambda}(t)\mid\\
&\leq& -(\beta\lambda^{L}(t)+\nu)\mid\hat{x}_1^{\sigma}(t)
- \hat{x}_1^{\sigma}(t))\mid+\beta M\mid\lambda(t)-\hat{\lambda}(t)\mid,
\end{eqnarray*}
that is,
\begin{eqnarray*}
D^{+}V_{1}^{\Delta}(t)
&\leq&
-(\beta\lambda^{L}(t)+\nu)\mid\hat{x}_1^{\sigma}(t)
- \hat{x}_1^{\sigma}(t)\mid+(\beta M)\mid\lambda(t)-\hat{\lambda}(t)\mid.
\end{eqnarray*}
Let $ v_{2}(t)= x_{2}(t)-\hat{x_{2}}(t)$. Similarly, we have
\begin{equation*}
\begin{split}
D^{+}&V_{2}^{\Delta}(t)
\leq sign(v_2^{\sigma}(t))(v_2(t))^{\Delta} \\
&= sign(v_{2}^{\sigma}(t))[\beta\lambda(t)x_1(t)
-(\rho+\phi+\nu)x_2^{\sigma}(t)+\gamma x_4(t)\\
&+\omega x_3(t)-\beta\hat{\lambda}(t)\hat{x}_1(t)
+(\rho+\phi+\nu)\hat{x}_2^{\sigma}(t)-\gamma \hat{x}_4(t)-\omega \hat{x}_3(t)]\\
&= sign(v_{2}^{\sigma}(t))[(\beta\lambda(t)x_1(t)
-\beta\hat{\lambda}(t)\hat{x}_1(t))-(\rho+\phi+\nu)(x_2^{\sigma}(t)
-\hat{x}_2^{\sigma}(t))\\
&+\gamma(x_4(t)-\hat{x}_4(t))+\omega(x_3(t)-\hat{x}_3(t))]\\
&= sign(v_{2}^{\sigma}(t))[\beta(\lambda(t)(x_1(t)
-\hat{x}_1(t))+\hat{x}_1(t)(\lambda(t)-\hat{\lambda}(t)))\\
&-(\rho+\phi+\nu)(x_2^{\sigma}(t)-\hat{x}_2^{\sigma}(t))
+\gamma(x_4(t)-\hat{x}_4(t))+\omega(x_3(t)-\hat{x}_3(t))]\\
&= sign(v_{2}^{\sigma}(t))[\beta\lambda(t)(x_1(t)-\hat{x}_1(t))
+\beta \hat{x}_1(t)(\lambda(t)-\hat{\lambda}(t))\\
&-(\rho+\phi+\nu)(x_2^{\sigma}(t)-\hat{x}_2^{\sigma}(t))
+\gamma(x_4(t)-\hat{x}_4(t))+\omega(x_3(t)-\hat{x}_3(t))]\\
&\leq \beta\lambda^{U}\mid x_1(t)-\hat{x}_1(t) \mid
+ \beta M \mid \lambda(t)-\hat{\lambda}(t)\mid\\
&- (\rho+\phi+\nu)\mid x_2^{\sigma}(t)-\hat{x}_2^{\sigma}\mid
+\gamma\mid x_4(t)-\hat{x}_4(t)\mid\\
&+\omega\mid x_3(t)-\hat{x}_3(t)\mid;
\end{split}
\end{equation*}
for $v_{3}(t)= x_{3}(t)-\hat{x_{3}}(t)$ one has
\begin{eqnarray*}
D^{+}V_{3}^{\Delta}(t) &\leq& sign(v_3^{\sigma}(t))(v_3(t))^{\Delta}\\
&=& sign(v_{3}^{\sigma}(t))[\phi x_2(t)-(\omega+\nu)x_3^{\sigma}(t)
-\phi \hat{x}_2(t)+(\omega+\nu)\hat{x}_3^{\sigma}(t)]\\
&\leq& \phi \mid x_2(t)-\hat{x}_2(t) \mid
- (\omega+\nu)\mid x_3^{\sigma}(t)-\hat{x}_3^{\sigma}(t) \mid;
\end{eqnarray*}
and for $v_{4}(t)= x_{4}(t)-\hat{x_{4}}(t)$ we have
\begin{eqnarray*}
D^{+}V_{4}^{\Delta}(t) & \leq & \rho\mid x_2(t)-\hat{x}_2(t)
\mid -(\gamma+\nu+d)\mid x_4^{\sigma}(t)-\hat{x}_4^{\sigma}(t) \mid.
\end{eqnarray*}
Since
$$
\lambda(t)=\frac{\beta}{N(t)}(x_1(t)+\eta_C x_3(t)+\eta_A)x_4(t),
$$
it follows that
\begin{equation}
\label{inéq1}
\begin{split}
D^{+}V^{\Delta}(t)
&\leq
-(\beta\lambda^{L}(t)+\nu)\mid\hat{x}_1^{\sigma}(t)
- \hat{x}_1^{\sigma}(t))\mid+(\beta M)\mid\lambda(t)-\hat{\lambda}(t)\mid\\
&\quad +\beta\lambda^{U}\mid x_1(t)-\hat{x}_1(t) \mid
+ \beta M \mid \lambda(t)-\hat{\lambda}(t)\mid\\
&\quad - (\rho+\phi+\nu)\mid x_2^{\sigma}(t)-\hat{x}_2^{\sigma}\mid
+\gamma\mid x_4(t)-\hat{x}_4(t)\mid\\
&\quad +\omega\mid x_3(t)-\hat{x}_3(t)\mid
+\phi \mid x_2(t)-\hat{x}_2(t) \mid\\
&\quad - (\omega+\nu)\mid x_3^{\sigma}(t)-\hat{x}_3^{\sigma}(t) \mid\\
&\quad +\rho\mid x_2(t)-\hat{x}_2(t) \mid -(\gamma+\nu+d)\mid x_4^{\sigma}(t)
-\hat{x}_4^{\sigma}(t) \mid.
\end{split}
\end{equation}
Therefore,
\begin{equation*}
\begin{split}
|\lambda(t)-\hat{\lambda}(t)|
&\leq\frac{\beta}{4m}\left[\mid x_1(t)-\hat{x}_1(t) \mid
+\eta_C\mid x_3(t)-\hat{x}_3(t) \mid\right.\\
&\left.\quad +\eta_A\mid x_4(t)-\hat{x}_4(t) \mid\right],
\end{split}
\end{equation*}
where $0\leq\eta_C\leq1$, $\eta_A\geq1$, and $\beta>0$.
The inequality (\ref{inéq1}) becomes
\begin{equation}
\label{ineq2}
\begin{split}
D^{+}V^{\Delta}(t)
&\leq-(\beta\lambda^{L}+\nu)\mid x_1^{\sigma}(t)
- \hat{x}_1^{\sigma}(t)\mid-(\rho+\phi+\nu)\mid x_2^{\sigma}(t)
- \hat{x}_2^{\sigma}(t)\mid\nonumber\\
&-(\omega+\nu)\mid x_3^{\sigma}(t)- \hat{x}_3^{\sigma}(t)\mid
-(\gamma+\nu+d)\mid x_4^{\sigma}(t)- \hat{x}_4^{\sigma}(t)\mid\nonumber\\
&+2\beta M\mid \lambda(t)-\hat{\lambda}(t)\mid+(\beta \lambda^{U})\mid x_1(t)
-\hat{x}_1(t) \mid+\gamma\mid x_4(t)-\hat{x}_4(t) \mid\nonumber\\
&+\omega\mid x_3(t)-\hat{x}_3(t) \mid
+(\rho+\phi)\mid x_2(t)-\hat{x}_2(t) \mid\nonumber \\
&=-\left(\beta\lambda^{L}+\nu\right)\mid x_1^{\sigma}(t)
-\hat{x}_1^{\sigma}(t) \mid-(\rho+\phi+\nu)\mid
x_2^{\sigma}(t)-\hat{x}_2^{\sigma}(t) \mid\nonumber\\
&-(\omega+\nu)\mid x_3^{\sigma}(t)
-\hat{x}_3^{\sigma}(t) \mid-(\gamma+\nu+d)\mid
x_4^{\sigma}(t)-\hat{x}_4^{\sigma}(t) \mid\nonumber\\
&+\beta\left(\lambda^{U}
+\frac{\beta M}{2m}\right)\mid x_1(t)-\hat{x}_1(t) \mid
+(\rho+\phi)\mid x_2(t)-\hat{x}_2(t) \mid\nonumber\\
&+\left(\omega+\frac{\beta^{2}M}{2m}\eta_C\right)\mid x_3(t)
-\hat{x}_3(t) \mid\nonumber\\
&+\left(\gamma+\frac{\beta^{2}M}{2m}\eta_A\right)\mid x_4(t)
-\hat{x}_4(t) \mid.
\end{split}
\end{equation}
From (\ref{ineq2}) we can write that
\begin{eqnarray*}
D^{+}V^{\Delta}(t)
&\leq&-(\beta\lambda^{L}+\nu)\mid x^{\sigma}_1(t)
-\hat{x}^{\sigma}_1(t)\mid-(\rho+\phi+\nu)\mid x_2^{\sigma}(t)
-\hat{x}_2^{\sigma}(t)\mid\\
&&-(\omega+\nu)\mid x^{\sigma}_3(t)-\hat{x}^{\sigma}_3(t)\mid-(\gamma+\nu+d)\mid x^{\sigma}_4(t)
-\hat{x}^{\sigma}_4(t)\mid\\
&&+\left(\beta\lambda^{U}+\frac{\beta^{2}M}{2m}\right)\mid\hat{x}_1(t)-\hat{x}_1(t)\mid\\
&&+(\phi+\rho)\mid x_2(t)-\hat{x}_2(t)\mid\\
&&+\left(\omega+\frac{\beta^{2}M}{2m}\eta_C\right)\mid x_3(t)-\hat{x}_3(t)\mid\\
&&+\left(\gamma+\frac{\beta^{2}M}{2m}\eta_A\right)\mid x_4(t)-\hat{x}_4(t)\mid\\
&=&-a_1\mid x^{\sigma}_1(t)-\hat{x}^{\sigma}_1(t)\mid-a_2\mid x_2^{\sigma}(t)
-\hat{x}_2^{\sigma}(t)\mid\\
&&-a_3\mid x^{\sigma}_3(t)-\hat{x}^{\sigma}_3(t)\mid-a_4\mid x^{\sigma}_4(t)
-\hat{x}^{\sigma}_4(t)\mid\\
&&+b_1\mid x_1(t)-\hat{x}_1(t)\mid+b_2\mid x_2(t)-\hat{x}_2(t)\mid\\
&&+b_3\mid x_3(t)-\hat{x}_3(t)|
+b_4\mid x_4(t)-\hat{x}_4(t)\mid\\
&=& -\Gamma_{1}V(\sigma(t))+\Gamma_{2}V(t)\\
&=& \left(\Gamma_{2}-\Gamma_{1}\right)V(t)-\Gamma_{1}\mu(t)D^{+}V^{\Delta}(t)
\end{eqnarray*}
and it follows that
$D^{+}V^{\Delta}(t)\leq \dfrac{\Gamma_{2}
-\Gamma_{1}}{ (1+\Gamma_{1}\mu)}V(t)=-\Psi V(t)$.
By hypothesis $(H_{2})$, one has
$-\Psi \in \mathcal{R^{+}}$ and
$\Psi=\dfrac{\Gamma_{1}-\Gamma_{2}}{ (1+\Gamma_{1}\mu)} >0$.
Thus, the assumption (iii) of Lemma~\ref{L2.2} is satisfied.
Furthermore, the conditions (i) and (ii) of Lemma~\ref{L2.2}
also hold. For condition (i) we consider two functions
$a,b\in C(\mathbb{R}^{+})$ with $a(x)= 0.5 x$ and $b(x)=2x$.
For condition (ii) we put $L=1$. So there exists a unique uniformly
asymptotically stable almost periodic solution
$Z(t)=(x_{1}(t),x_{2}(t),x_{3}(t),x_{4}(t))$
of the dynamic system (\ref{1.1}) with $ Z(t) \in \Omega$.
\end{proof}

\begin{example}
\label{ex01}
Motivated by \cite{[Lotfi]} and the case of Morocco,
we consider the following system for $\mathbb{T}=\mathbb{Z}^{+}$:
\begin{eqnarray}
\label{example}
\left\{ \begin{array}{l l}
x_{1}^{\Delta}(t)
=\Lambda-\beta \lambda(t)x_{1}^{\sigma}(t)-\nu x_{1}^{\sigma}(t),\\
x_{2}^{\Delta}(t)
=\beta \lambda(t) x_{1}(t)-(\rho+\phi+\nu)x_{2}^{\sigma}(t)
+\gamma x_{4}(t)+\omega x_{3}(t),\\
x_{3}^{\Delta}(t)
=\phi x_{2}(t)-(\omega+\nu)x_{3}^{\sigma}(t),\\
x_{4}^{\Delta}(t)
=\rho x_{2}(t)-(\gamma+\nu+d)x_{4}^{\sigma}(t),
\end{array}
\right.
\end{eqnarray}
where
$$
x_1(0) = 1-\frac{11}{N_0},
\quad x_2(0) = \frac{2}{N_0},
\quad x_3(0) = 0,
\quad x_4(0) = \frac{9}{N_0},
$$
with $N_0=325235$, $\Lambda=2190$, $\beta=2.7 10^{-7}$, $\nu=0.39$, $\rho=0.2$,
$\phi=0.1$, $\gamma=0.33$, $\omega=0.09$, and $d=1$.
The system \eqref{example} is permanent. Furthermore,
$m_1=5615.381462$, $m_2=5.478704120 10^{-9}$,
$m_3=1.141396692 10^{-9}$, $m_4=6.370586186 10^{-10}$,
$M_1=5615.384615$, $M_2=  0.002773104793$, $M_3=0.0005777301652$,
and $M_4= 0.0003224540457$. The conditions of Theorem~\ref{last:thm}
are verified with $0.37=\Gamma_2< \Gamma_1=0.39$ and
$\Psi=0.01391941151$. Therefore, system \eqref{example} has a unique
positive almost periodic solution, which is uniformly asymptotic stable.
In Figure~\ref{fig} we plot the solution for the first 7 years
with $\eta_C = 0.5$ and $\eta_A = 1.5$. Computationally, it is difficult
to plot the solution for longer periods of time.
\begin{figure}
\centering
\begin{subfigure}{0.44\textwidth}
\includegraphics[width=\textwidth]{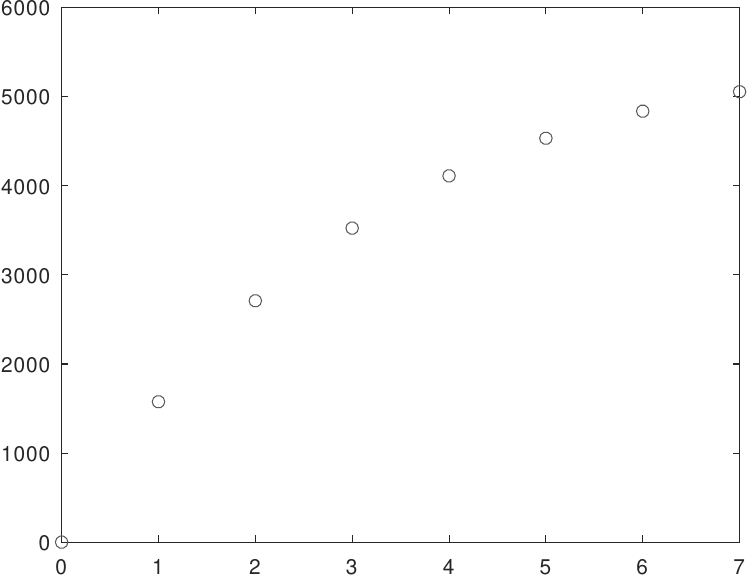}
\caption{Evolution of $x_1(t)$ (Susceptible)}
\label{fig:x1}
\end{subfigure}
\hfill
\begin{subfigure}{0.44\textwidth}
\includegraphics[width=\textwidth]{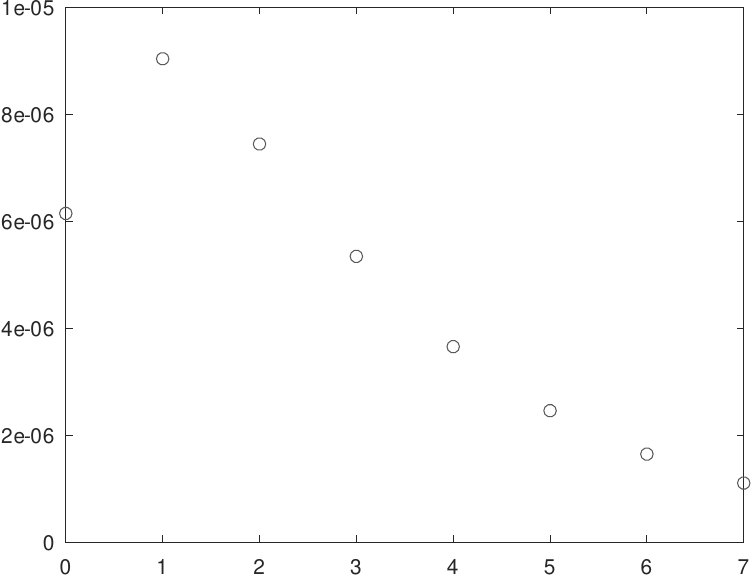}
\caption{Evolution of $x_2(t)$ (Infected)}
\label{fig:x2}
\end{subfigure}
\hfill
\begin{subfigure}{0.44\textwidth}
\includegraphics[width=\textwidth]{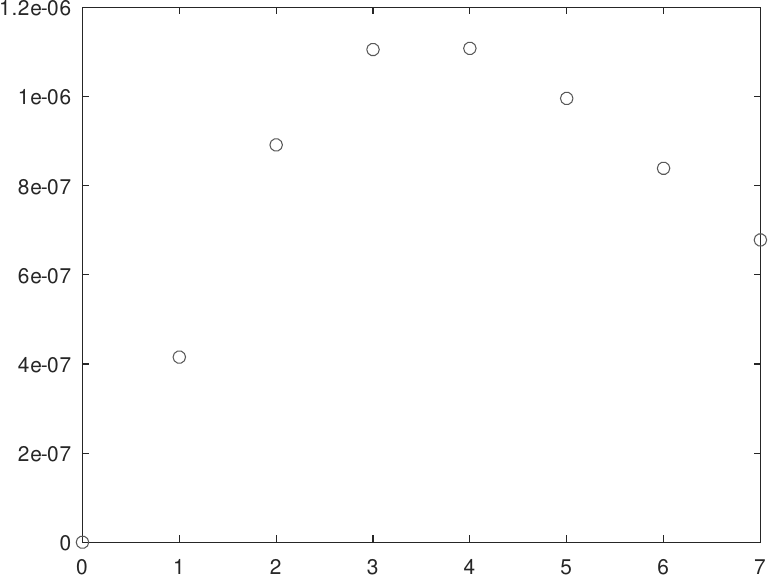}
\caption{Evolution of $x_3(t)$ (Chronic)}
\label{fig:x3}
\end{subfigure}
\hfill
\begin{subfigure}{0.44\textwidth}
\includegraphics[width=\textwidth]{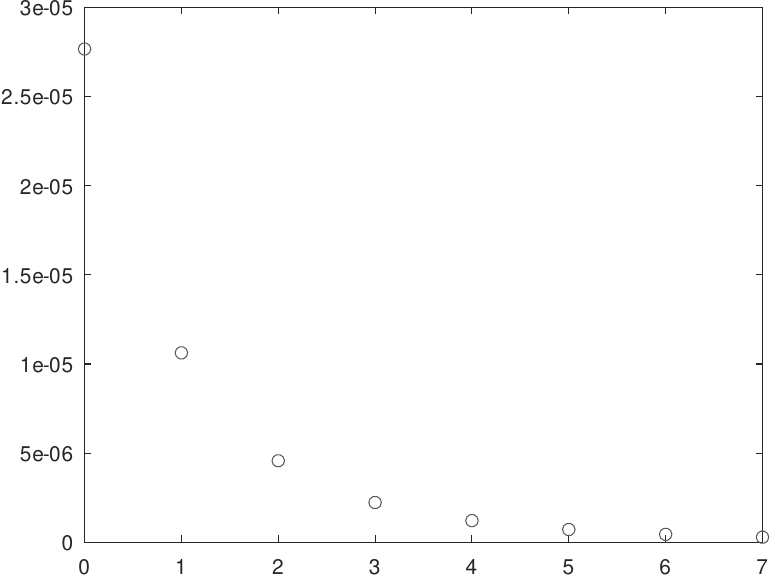}
\caption{Evolution of $x_4(t)$ (AIDS)}
\label{fig:x4}
\end{subfigure}
\caption{Example~\ref{ex01}: solution of \eqref{example} during 7 years.}
\label{fig}
\end{figure}
\end{example}


\section{Conclusion}
\label{sec5}

We have investigated the uniform stability of the singular positive
solution in an HIV/AIDS epidemic model, specifically the SICA model
on an arbitrary time scale. The purpose of incorporating time scales
is to integrate both continuous and discrete time models. We established
the permanence of each solution and, using a suitable Lyapunov function,
we derived a sufficient condition for uniform asymptotic stability of the solution.
Additionally, we presented an illustrative example to substantiate our analytical results.


\subsection*{Acknowledgments}

This research is part of first author's Ph.D. project,
which is carried out at University Mustapha Stambouli of Mascara.


\subsection*{Funding}

Torres was supported by CIDMA and is funded by the Funda\c{c}\~{a}o
para a Ci\^{e}ncia e a Tecnologia, I.P. (FCT, Funder ID = 50110000187)
under Grants UIDB/04106/2020 and UIDP/04106/2020.


\subsection*{Competing and Conflict of Interests}

The authors have no conflicts or competing of interests to declare.


\subsection*{Data Availability}

No data is associated to the manuscript.


\subsection*{Author contributions}

All authors contributed equally to this work
and reviewed and approved the final manuscript.


\normalsize


\bigskip



\begin{thebibliography}{xx}

\normalsize
\baselineskip=17pt


\bibitem{[Martin2]}
M. Bohner, J. Heim\ and\ A. Liu,
{\it Solow models on time scales},
Cubo {\bf 15} (2013), no.~1, 13--31.

\bibitem{[b1]}
M. Bohner\ and\ A. Peterson,
{\it Dynamic equations on time scales},
Birkh\"{a}user Boston, Inc., Boston, MA, 2001.

\bibitem{[b0]}
M. Bohner\ and\ A. Peterson,
{\it Advances in dynamic equations on time scales},
Birkh\"{a}user Boston, Inc., Boston, MA, 2003.

\bibitem{[Martin]}
M. Bohner, S. Streipert\ and\ D. F. M. Torres,
{\it Exact solution to a dynamic SIR model},
Nonlinear Anal. Hybrid Syst. {\bf 32} (2019), 228--238.
{\tt arXiv:1812.09759}

\bibitem{MyID:471}
A. Boukhouima, E. M. Lotfi, M. Mahrouf, S. Rosa, D. F. M. Torres\ and\ N. Yousfi,
{\it Stability analysis and optimal control of a fractional HIV-AIDS epidemic
model with memory and general incidence rate},
Eur. Phys. J. Plus {\bf 136} (2021), Art.~103, 20~pp.
{\tt arXiv:2012.04819}

\bibitem{[Torres2]}
J. Djordjevic, C. J. Silva\ and\ D. F. M. Torres,
{\it A stochastic SICA epidemic model for HIV transmission},
Appl. Math. Lett. {\bf 84} (2018), 168--175.
{\tt arXiv:1805.01425}

\bibitem{MR3313737}
M. Dryl\ and\ D. F. M. Torres,
{\it A general delta-nabla calculus of variations
on time scales with application to economics},
Int. J. Dyn. Syst. Differ. Equ. {\bf 5} (2014), no.~1, 42--71.
{\tt arXiv:1410.1190}

\bibitem{MR3525237}
O. S. Fard, D. F. M. Torres\ and\ M. R. Zadeh,
{\it A Hukuhara approach to the study of hybrid fuzzy
systems on time scales},
Appl. Anal. Discrete Math. {\bf 10} (2016), no.~1, 152--167.
{\tt arXiv:1603.03737}

\bibitem{Hong}
S. Hong, 
{\it Stability criteria for set dynamic equations in time scales},
Comput. Math. Appl. {\bf 59} (2010), no.~11, 3444--3457.

\bibitem{[b4]}
M. Hu\ and\ L. Wang,
{\it Dynamic inequalities on time scales with applications
in permanence of predator-prey system},
Discrete Dyn. Nat. Soc. {\bf 2012} (2012), Art. ID 281052, 15~pp.

\bibitem{MR4378676}
R. Jankowski\ and\ A. Chmiel,
{\it Role of time scales in the coupled epidemic-opinion
dynamics on multiplex networks},
Entropy {\bf 24} (2022), no.~1, Paper No. 105, 14~pp.

\bibitem{MR1865777}
V. Kac\ and\ P. Cheung,
{\it Quantum calculus},
Universitext, Springer-Verlag, New York, 2002.

\bibitem{461}
B. Kaymak\c{c}alan, 
{\it Lyapunov stability theory for dynamic systems on time scales},
J. Appl. Math. Stochastic Anal. {\bf 5} (1992), no.~3, 275--281.

\bibitem{[Li]}
Y. Li\ and\ C. Wang,
{\it Uniformly almost periodic functions and almost periodic
solutions to dynamic equations on time scales},
Abstr. Appl. Anal. {\bf 2011} (2011), Art. ID 341520, 22~pp.

\bibitem{wong}
Y. Li\ and\ C. Wang,
{\it Almost periodic solutions to dynamic equations on time scales and applications},
J. Appl. Math. {\bf 2012}, Art. ID 463913, 19~pp.

\bibitem{[Lotfi]}
E. M. Lotfi, M. Mahrouf, M. Maziane, C. J. Silva, D. F. M. Torres\ and\ N. Yousfi,
{\it A minimal HIV-AIDS infection model with general incidence
rate and application to Morocco data},
Stat. Optim. Inf. Comput. {\bf 7} (2019), no.~3, 588--603.
{\tt arXiv:1812.06965}

\bibitem{MR3184533}
A. B. Malinowska\ and\ D. F. M. Torres,
{\it Quantum variational calculus},
SpringerBriefs in Electrical and Computer Engineering,
Springer, Cham, 2014.

\bibitem{May}
R. M. May\ and\ R. M. Anderson, 
{\it Transmission dynamics of HIV infection}, 
Nature {\bf 326} (1987), 137--142.

\bibitem{[khudd]}
K. R. Prasad\ and\ Md. Khuddush,
{\it Stability of positive almost periodic solutions for a fishing
model with multiple time varying variable delays on time scales},
Bull. Int. Math. Virtual Inst. {\bf 9} (2019), no.~3, 521--533.

\bibitem{[khudd2]}
K. R. Prasad\ and\ Md. Khuddush,
{\it Existence and uniform asymptotic stability of positive almost periodic
solutions for three-species Lotka-Volterra competitive system on time scales},
Asian-Eur. J. Math. {\bf 13} (2020), no.~3, 2050058, 24~pp.

\bibitem{MR4185230}
K. R. Prasad, M. Khuddush\ and\ K. V. Vidyasagar,
{\it Almost periodic positive solutions for a time-delayed SIR
epidemic model with saturated treatment on time scales},
J. Math. Model. {\bf 9} (2021), no.~1, 45--60.

\bibitem{[Silva]}
C. J. Silva\ and\ D. F. M. Torres,
{\it A TB-HIV/AIDS coinfection model and optimal control treatment},
Discrete Contin. Dyn. Syst. {\bf 35} (2015), no.~9, 4639--4663.
{\tt arXiv:1501.03322}

\bibitem{[Silva2]}
C. J. Silva\ and\ D. F. M. Torres,
{\it A SICA compartmental model in epidemiology
with application to HIV/AIDS in Cape Verde},
Ecological Complexity {\bf 30} (2017), 70--75.
{\tt arXiv:1612.00732}

\bibitem{[Silva3]}
C. J. Silva\ and\ D. F. M. Torres,
{\it Stability of a fractional HIV/AIDS model},
Math. Comput. Simulation {\bf 164} (2019), 180--190.
{\tt arXiv:1903.02534}

\bibitem{[Torres]}
C. J. Silva\ and\ D. F. M. Torres,
{\it On SICA models for HIV transmission},
in {\it Mathematical modelling and analysis of infectious diseases},
155--179, Stud. Syst. Decis. Control, 302, Springer, Cham, 2020.
{\tt arXiv:2004.11903}

\bibitem{[Tisdell]}
C. C. Tisdell\ and\ A. Zaidi,
{\it Basic qualitative and quantitative results for solutions to nonlinear,
dynamic equations on time scales with an application to economic modelling},
Nonlinear Anal. {\bf 68} (2008), no.~11, 3504--3524.

\bibitem{MyID:481}
S. Vaz\ and\ D. F. M. Torres,
{\it A dynamically-consistent nonstandard finite
difference scheme for the SICA model},
Math. Biosci. Eng. {\bf 18} (2021), no.~4, 4552--4571.
{\tt arXiv:2105.10826}

\bibitem{MR4252546}
C. Wang\ and\ R. P. Agarwal,
{\it A survey of function analysis
and applied dynamic equations on hybrid time scales},
Entropy {\bf 23} (2021), no.~4, Paper No. 450, 66~pp.

\bibitem{[b18]}
H. Zhang\ and\ Y. Li,
{\it Almost periodic solutions to dynamic equations on time scales},
J. Egyptian Math. Soc. {\bf 21} (2013), no.~1, 3--10.

\bibitem{MR3124383}
Y. Zhi, Z. Ding\ and\ Y. Li,
{\it Permanence and almost periodic solution for an enterprise cluster
model based on ecology theory with feedback controls on time scales},
Discrete Dyn. Nat. Soc. {\bf 2013} (2013), Art. ID 639138, 14~pp.

\end{thebibliography}
\end{document}